\documentclass[1p,preprint]{elsarticle}
\usepackage{amsmath}
\usepackage{amsthm}
\usepackage{amsfonts}
\usepackage{amssymb}
\usepackage{graphicx}
\usepackage{tikz}
\usepackage{url}
\usepackage{listings}
\usepackage{hhline}
\usepackage{xcolor}
\usepackage{booktabs}

\newtheorem{theorem}{Theorem}

\journal{Journal of Combinatorial Theory, Series A}

\begin{document}

\begin{frontmatter}

\title{Constraint Satisfaction Programming for the No-three-in-line Problem}

\author{Thomas Prellberg}
\ead{t.prellberg@qmul.ac.uk}
\address{School of Mathematical Sciences\\Queen Mary University of London\\United Kingdom}

\begin{abstract}
Using a constraint satisfaction approach, we exhibit configurations of $2n$ points on the $n\times n$
grid for all $n\le60$ with no three collinear. Consequently, the smallest $n$ for which it is unknown
whether $D(n)=2n$ increases from $47$ to $61$.
\end{abstract}

\begin{keyword}
No-three-in-line problem \sep Lattice points \sep Combinatorial geometry \sep Constraint programming
\MSC[2020] 05B40 \sep 52C10
\end{keyword}

\end{frontmatter}

\section{Introduction}

The no-three-in-line problem, first posed more than one hundred years ago for a chessboard by Henry
Dudeney \cite[Puzzle 317]{dudeney1917}, asks for the maximum number $D(n)$ of lattice points that can
be selected from the $n\times n$ points of the square lattice so that no three lie on a common line
\cite{guy1981,moser1986}. An obvious upper bound is $D(n)\le2n$, and for large $n$ it is conjectured
that $D(n)<cn$ with $c=\pi/\sqrt3\approx1.81$ using a probabilistic estimate\cite{guy1968}. 
Best known constructions for large $n$ give 
configurations with $\frac 32n-o(n)$ lattice points based on modular hyperbolas \cite{hall1975}. Recent extensions include
generalizations to the infinite square grid \cite{nagy2023} and to higher dimensions \cite{lin2021}.
It is worth noting that the classical no-three-in-line problem discussed here is a special case of the general position problem
in combinatorial geometry and graphs, as surveyed in \cite{chandran2026}.

Until recently, examples with $D(n)=2n$ were known only for $n\le46$ and for $n=48,50,52$
\cite{flammenkamp1992,flammenkamp1998}. Harborth, Oertel, and Prellberg \cite{harborth1988} showed that
symmetry reduction is an effective way to reduce the search space; this was also key in
\cite{flammenkamp1992,flammenkamp1998}, where known solutions for large $n$ had $90^\circ$ rotational
symmetry for $n$ even, or $90^\circ$ rotational symmetry except for the diagonal for $n$ odd
(augmented by two symmetric points on one diagonal). The work presented here also makes use of these symmetries.

\begin{figure}[t!]
\centering
\includegraphics[width=.8\textwidth]{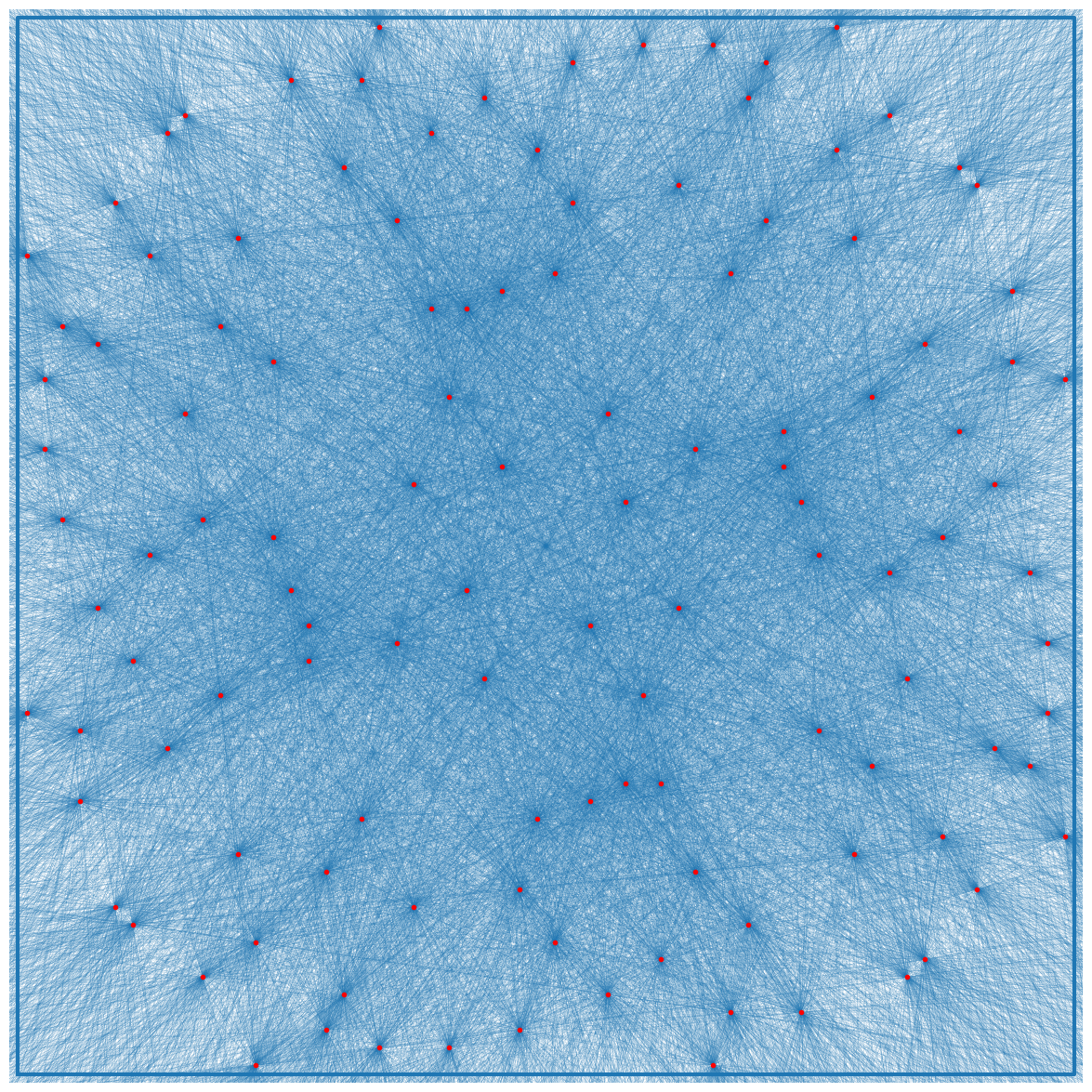}
\caption{$2n$ lattice points with no three in line for $n=60$.
The $120$ occupied sites (red) are shown together with the $\binom{120}{2}=7140$ straight-line
segments joining each pair of occupied sites (blue), illustrating the $90^\circ$ rotational symmetry.}
\label{fig60}
\end{figure}

There has been renewed interest in the computational aspect of this and related problems, applying
modern techniques such as integer programming, reinforcement learning, and AI
\cite{eppstein2018,charton2024,ramanathan2025}. Using a computer search based on constraint
satisfaction and CP-SAT \cite{cpsatlp}, we find configurations with $D(n)=2n$ for all $n\le60$.
Figure~\ref{fig60} shows the largest such configuration at $n=60$. Thus the smallest $n$ for which it
is unknown whether $D(n)=2n$ is $61$.

We conclude this section with detailing the probabilistic counting heuristic from \cite{guy1968}.
Let $G_n$ be the square grid of $n^2$ points, and let $t_n$ denote
the number of collinear triples in $G_n$,
\[
t_n=2\sum_{a=1}^{n-1}\sum_{b=1}^{n-1}(n-a)(n-b)\gcd(a,b)-\frac16n^2(n^2-1)=\frac{3}{\pi^2}n^4\log n+O(n^4).
\]
A uniformly random triple of points is collinear with probability $q_n=t_n/\binom{n^2}{3}$.
Now choose a $k$-subset $T\subset G_n$ uniformly at random. There are $\binom{k}{3}$
triples inside $T$. Approximating the events that these triples are collinear as
independent (a crude but convenient heuristic), we estimate
\[
\mathbb{P}(\text{$T$ contains no collinear triple})
\approx (1-q_n)^{\binom{k}{3}}.
\]
Multiplying by the number of candidate sets gives the heuristic count
\[
C(n,k)=\binom{n^2}{k}\left(1-q_n\right)^{\binom{k}{3}}.
\]
Next fix $\lambda>0$ and set $k=\lfloor\lambda n\rfloor$. Using standard asymptotics for
$\binom{n^2}{k}$ with $k=O(n)$ and the expansion $\log(1-q_n)=-q_n+o(q_n)$, one finds
\[
\log C(n,\lfloor\lambda n\rfloor)
\sim\left(\lambda-\frac{3\lambda^3}{\pi^2}\right)n\log n
=\frac{\lambda(\pi^2-3\lambda^2)}{\pi^2}\,n\log n.
\]
In particular, the leading coefficient changes sign at
\[
\lambda_c=\frac{\pi}{\sqrt3}\approx 1.81,
\]
suggesting that $2n$-point no-three-in-line configurations should become rare as $n$ grows. Moreover, using the exact expression for $t_n$, we find that the heuristic count $C(n,2n)$ becomes smaller than one once $n\geq493$. While this crude heuristic should not be taken too seriously, we note that $n=60$ is still far below the suspected threshold where $D(n)=2n$ might fail. 

Maybe more importantly, if one additionally incorporates rotational symmetry into this heuristic (180-degree symmetry suffices), the expected number of solutions drops immediately below one, even though solutions having rotational symmetry clearly exist. This exposes a clear weakness of this heuristic and casts doubts as to its validity.

\section{Results and formulations}

\subsection{Satisfiability formulation}

Let $[n]_0:=\{0,1,\dots,n-1\}\subset\mathbb Z$ and $G_n:=[n]_0^2\subset\mathbb Z^2$. Associate a binary
variable $x_{ij}\in\{0,1\}$ to each $(i,j)\in G_n$, where $x_{ij}=1$ indicates that $(i,j)$ is
occupied. For $q\ge2$ define
\[
\mathcal L^n_{\ge q}:=\{\ell\subset\mathbb R^2:\ell\text{ is an affine line and }|\ell\cap G_n|\ge q\}.
\]
The no-three-in-line problem can be written as the integer program
\[
D(n):=\max\left\{\sum_{(i,j)\in G_n}x_{ij}:\sum_{(i,j)\in\ell\cap G_n}x_{ij}\le2\ \ \forall \ell\in\mathcal L^n_{\ge3}\right\}.
\]
Since no-three-in-line implies $\leq2$ per row, any configuration with $2n$ points must have exactly two points in every row. Therefore we also consider the satisfiability variant obtained by enforcing this explicitly:
\begin{equation}
\label{CPSAT}
\mathcal F_n :=
\left\{ (x_{ij})_{(i,j)\in G_n}\in\{0,1\}^{G_n} \,:\,
\begin{aligned}
&\sum_{(i,j)\in \ell\cap G_n} x_{ij}\le 2
&&\forall \ell\in \mathcal L^n_{\ge 3},\\
&\sum_{i\in[n]_0} x_{ij}=2
&&\forall j\in[n]_0
\end{aligned}
\right\}.
\end{equation}

\subsection{Existence up to $n=60$}

\begin{theorem}
For $2\le n\le60$ there exists a configuration in $\mathcal F_n$. In particular, $D(n)=2n$ for
$2\le n\le60$.
\end{theorem}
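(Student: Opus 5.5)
The statement is existential, so the proof is by exhibiting, for each $2\le n\le60$, an explicit $x\in\mathcal F_n$ and checking it. The ``in particular'' part is immediate once such an $x$ is given. Every row is itself a line, and for $n\ge3$ it lies in $\mathcal L^n_{\ge3}$, so the row constraint gives $D(n)\le 2n$. Any $x\in\mathcal F_n$ has exactly $2n$ occupied points and meets every line constraint, so $D(n)\ge2n$. (For $n=2$ all four points work trivially.) The work therefore splits into two parts: finding the configurations, and certifying them.

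To find configurations I would follow the symmetry reduction of \cite{harborth1988,flammenkamp1992,flammenkamp1998}. For $n$ even, impose invariance under the $90^\circ$ rotation $(i,j)\mapsto(j,n-1-i)$. Variables then split into orbits of size $4$, leaving $n^2/4$ free variables. Line constraints become constraints on orbit variables, and lines identified under rotation give the same constraint, so we keep one representative of each line orbit. For $n$ odd, impose $90^\circ$ symmetry off the main diagonal, with two symmetric points placed on one diagonal, as in \cite{flammenkamp1998}. The constraint sets $\mathcal L^n_{\ge3}$ are enumerated by primitive direction vectors $(a,b)$ with $\gcd(a,b)=1$ and $|a|,|b|<n/2$ or so, together with starting points. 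Each constraint is an at-most-two cardinality constraint, and each row gets an exactly-two constraint. The reduced model is then passed to CP-SAT \cite{cpsatlp}. For small $n$ (up to $46$, and $48,50,52$) the known solutions can simply be cited or recomputed. The new cases are $n=47,49,51,53,\dots,60$.

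Certification is independent of the solver. Given the $2n$ listed points, compute all $\binom{2n}{2}$ pairs and check that no third listed point lies on the line through them. Equivalently, compute a normalized line key $(a,b,c)$ for each pair and verify that no key occurs more than once. This is $O(n^2\log n)$ work and trivially auditable. The certificates, meaning the coordinate lists, would be published as supplementary data or in a table, and Figure~\ref{fig60} illustrates the case $n=60$.

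The main obstacle is solver runtime for the largest odd $n$, for example $n=59$. The symmetric class is sparser there, and CP-SAT may stall. I would first try the following, in order. Add symmetry-breaking constraints for the residual reflection or conjugation symmetry. Use several random seeds and parallel portfolio workers. Add redundant column and diagonal at-most-two constraints to strengthen propagation. If these fail, also try hint solutions extrapolated from neighbouring $n$.
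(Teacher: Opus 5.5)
Your proposal is correct and follows the paper's proof. Both cite the known $2n$-point configurations for $n\le46$ and $n\in\{48,50,52\}$, exhibit explicit $90^\circ$-rotationally symmetric configurations (with the odd-$n$ diagonal modification) found by CP-SAT for $n\in\{47,49,51\}\cup\{53,\dots,60\}$, and conclude $D(n)=2n$ from the row bound $D(n)\le 2n$; the paper lists these certificates as orbit representatives in Table~\ref{table1}, and they can be checked without the solver exactly as in your pairwise line-key argument.
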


\begin{proof}
For $n\le46$ and for $n\in\{48,50,52\}$ the existence of configurations with $D(n)=2n$ is given in
\cite{flammenkamp1992,flammenkamp1998}. Table~\ref{table1} supplies configurations in $\mathcal F_n$
for the remaining values $n\in\{47,49,51,53,54,55,56,57,58,59,60\}$. Since $D(n)\le2n$, this implies
$D(n)=2n$ for all $2\le n\le60$.
\end{proof}

\begin{table}[!ht]
\centering
\begin{tabular}{|p{0.03\linewidth}||p{0.95\linewidth}|}
\hline
$n$&\\
\hhline{|=||=|}
47&(0, 17), (0, 21), (1, 12), (3, 8), (4, 19), (5, 9), (5, 21), (6, 22), (7, 4), (7, 16), {\bf(8, 8)}, (10, 2), (10, 12), (11, 3), (11, 15), (13, 2), (14, 19), (15, 9), (16, 18), (17, 14), (18, 6), (20, 1), (20, 13), (23, 22)\\
\hline
49&(0, 13), (1, 18), (2, 9), (3, 5), (4, 2), (5, 20), (6, 16), (6, 18), (10, 4), (10, 7), (11, 15), (11, 19), (12, 9), (13, 14), {\bf(15, 15)}, (16, 7), (17, 0), (17, 21), (19, 14), (20, 8), (21, 22), (22, 3), (23, 1), (23, 12), (24, 8)\\
\hline
51&{\bf(0, 0)}, (0, 20), (2, 13), (4, 14), (4, 23), (5, 9), (6, 5), (7, 18), (10, 9), (10, 12), (11, 8), (13, 22), (14, 6), (15, 1), (15, 22), (16, 2), (16, 8), (17, 12), (18, 23), (19, 3), (20, 17), (21, 3), (21, 11), (24, 1), (24, 7), (25, 19)\\
\hline
53&(0, 16), (0, 20), (2, 20), (2, 25), (3, 24), {\bf(5, 5)}, (5, 8), (6, 14), (6, 24), (7, 8), (7, 21), (9, 10), (11, 3), (13, 9), (13, 19), (14, 17), (15, 1), (15, 22), (17, 12), (18, 1), (18, 12), (19, 4), (21, 4), (22, 11), (23, 10), (23, 25), (26, 16)\\
\hline
54&(0, 22), (0, 24), (2, 16), (3, 7), (3, 24), (4, 15), (5, 11), (6, 17), (8, 25), (9, 5), (9, 6), (11, 14), (12, 2), (12, 13), (13, 21), (17, 4), (18, 10), (18, 16), (19, 1), (19, 10), (20, 8), (20, 15), (22, 1), (23, 7), (23, 14), (25, 26), (26, 21)\\
\hline
55&(2, 14), (3, 18), {\bf(4, 4)}, (4, 18), (5, 9), (6, 2), (6, 11), (7, 26), (8, 19), (8, 20), (9, 25), (10, 13), (11, 13), (12, 3), (12, 21), (14, 16), (15, 10), (17, 20), (17, 24), (19, 1), (22, 5), (22, 7), (23, 1), (23, 15), (24, 16), (25, 0), (26, 0), (27, 21)\\
\hline
56&(1, 5), (2, 13), (3, 1), (3, 27), (5, 26), (6, 17), (7, 23), (8, 20), (9, 10), (9, 14), (10, 19), (12, 0), (12, 21), (14, 11), (16, 6), (16, 21), (17, 24), (18, 4), (18, 11), (19, 15), (20, 7), (22, 2), (22, 4), (23, 24), (25, 8), (25, 15), (26, 13), (27, 0)\\
\hline
57&(1, 23), (3, 24), (4, 6), (4, 24), (5, 16), (5, 18), (6, 3), (7, 11), (8, 16), (8, 26), (9, 2), {\bf(10, 10)}, (12, 0), (12, 21), (13, 7), (13, 17), (14, 15), (14, 26), (15, 17), (19, 10), (19, 11), (20, 1), (20, 25), (22, 21), (23, 2), (25, 0), (27, 9), (27, 22), (28, 18)\\
\hline
58&(1, 14), (1, 19), (2, 19), (3, 26), (3, 27), (5, 15), (6, 5), (7, 18), (8, 4), (10, 7), (11, 4), (12, 15), (13, 11), (13, 18), (14, 22), (16, 21), (17, 2), (17, 8), (20, 6), (20, 24), (21, 0), (22, 16), (23, 23), (25, 9), (25, 10), (26, 24), (27, 0), (28, 9), (28, 12)\\
\hline
59&(0, 12), (1, 26), (2, 27), (3, 15), (4, 7), (5, 21), (5, 25), (7, 14), (8, 16), (8, 26), {\bf(9, 9)}, (9, 11), (10, 4), (10, 27), (12, 16), (13, 6), (14, 18), (15, 25), (18, 17), (19, 6), (20, 1), (21, 11), (22, 17), (23, 3), (23, 20), (24, 0), (24, 13), (28, 19), (28, 22), (29, 2)\\
\hline
60&(0, 20), (3, 15), (3, 19), (4, 26), (5, 9), (6, 8), (6, 23), (7, 29), (8, 18), (10, 5), (11, 21), (12, 12), (13, 0), (13, 7), (15, 27), (16, 23), (16, 25), (17, 2), (17, 11), (18, 4), (19, 14), (20, 1), (21, 24), (22, 9), (24, 1), (25, 27), (26, 22), (28, 2), (28, 10), (29, 14)\\
\hline
\end{tabular}
\caption{Representatives of occupied-site orbits. Here $\rho(i,j)=(j,n-1-i)$ is the $90^\circ$ rotation about the
center $(\frac{n-1}2,\frac{n-1}2)$. For $n$ odd, bold entries lie on the main diagonal and generate $2$-cycles under
$\rho^2$; the anti-diagonal is empty. The full configuration is obtained by applying $\rho$ (off-diagonal entries) or
$\rho^2$ (bold diagonal entries for odd $n$) to each representative.}
\label{table1}
\end{table}

\section{Symmetry reduction}

The configurations in Table~\ref{table1} have additional rotational symmetry. Let $\rho:G_n\to G_n$
denote the $90^\circ$ rotation about the center of the grid,
\[
\rho(i,j)=(j,n-1-i),
\]
and let $\langle\rho\rangle$ be the cyclic group it generates. Table~\ref{table1} lists occupied sites
only in a fundamental domain $H_n\subseteq G_n$; the full configuration is obtained by taking the
appropriate rotational orbit of each listed site (full $\langle\rho\rangle$-orbits off-diagonal and
$\langle\rho^2\rangle$-orbits on the main diagonal when $n$ is odd).

Concretely, define
\[
H_n:=\begin{cases}
[\,n/2\,]_0\times[\,n/2\,]_0,&n\text{ even},\\[2mm]
[\, (n+1)/2 \,]_0\times[\, (n-1)/2 \,]_0,&n\text{ odd}.
\end{cases}
\]
Under $\rho$, a generic site has a $\langle\rho\rangle$-orbit of size $4$. When $n$ is odd we require
the anti-diagonal $\{(i,n-1-i)\}$ to be empty, and diagonal representatives are expanded only under
$\langle\rho^2\rangle$; these are indicated in boldface in Table~\ref{table1}.

For even $n$, every site in $H_n$ has a $\langle\rho\rangle$-orbit of size $4$, so choosing $n/2$
orbit representatives in $H_n$ yields $4\cdot(n/2)=2n$ occupied sites in $G_n$. For odd $n$,
off-diagonal sites in $H_n$ have $\langle\rho\rangle$-orbits of size $4$, whereas a diagonal site
$(i,i)$ has a $\langle\rho^2\rangle$-orbit of size $2$. Thus choosing $(n-1)/2$ off-diagonal
representatives and one diagonal representative yields $4\cdot((n-1)/2)+2=2n$ occupied sites in
$G_n$.

To encode these symmetry requirements in the satisfiability model, introduce binary variables
$y_{ij}\in\{0,1\}$ for $(i,j)\in H_n$ and link them to occupancy variables $x_{kl}\in\{0,1\}$ on $G_n$
as follows. If $n$ is even, then every $\langle\rho\rangle$-orbit meets $H_n$ in exactly one site, and
we set
\[
x_{\rho^t(i,j)}:=y_{ij}\qquad\text{for all }(i,j)\in H_n\text{ and }t\in\{0,1,2,3\}.
\]
If $n$ is odd, we keep the anti-diagonal empty and distinguish off-diagonal and diagonal sites. For
$(i,j)\in H_n$ with $i\ne j$ set
\[
x_{\rho^t(i,j)}:=y_{ij}\qquad\text{for }t\in\{0,1,2,3\},
\]
while for diagonal sites $(i,i)\in H_n$ set
\[
x_{i,i}=x_{\rho^2(i,i)}=x_{n-1-i,n-1-i}:=y_{ii},
\]
and enforce
\[
x_{i,n-1-i}=0\qquad\text{for all }i\in[n]_0.
\]

Equivalently, for $(i,j)\in H_n$ define
\[
O(i,j):=\begin{cases}
\{\rho^t(i,j):t=0,1,2,3\},&\text{if $n$ is even or $i\ne j$},\\[1mm]
\{(i,i),\rho^2(i,i)\}=\{(i,i),(n-1-i,n-1-i)\},&\text{if $n$ is odd and $i=j$},
\end{cases}
\]
and for odd $n$ let $A_n:=\{(i,n-1-i):i\in[n]_0\}$ denote the anti-diagonal. Then all sites in
$O(i,j)$ share the common value $y_{ij}$, and all sites in $A_n$ are fixed to $0$.

Let $\Gamma_n:=\langle\rho\rangle$ if $n$ is even and $\Gamma_n:=\langle\rho^2\rangle$ if $n$ is odd.
Let $\mathcal L^{n,\Gamma}_{\ge3}\subseteq \mathcal L^n_{\ge3}$ be a choice of one representative from
each $\Gamma_n$-orbit of lines in $\mathcal L^n_{\ge3}$.

For any affine line $\ell\subset\mathbb R^2$ define
\[
c_{\ell}(i,j):=\bigl|\ell\cap O(i,j)\bigr|\qquad((i,j)\in H_n),
\]
and for any row index $r\in[n]_0$ define
\[
d_{r}(i,j):=\bigl|\{(k,r)\in O(i,j)\}\bigr|\qquad((i,j)\in H_n).
\]
The reduced satisfiability model becomes
\begin{equation}
\label{CPSATred}
\mathcal F_n^{\mathrm{sym}} :=
\left\{ (y_{ij})_{(i,j)\in H_n}\in\{0,1\}^{H_n} \,:\,
\begin{aligned}
&\sum_{(i,j)\in H_n} c_{\ell}(i,j)\,y_{ij}\le 2
&&\forall \ell\in \mathcal L^{n,\Gamma}_{\ge 3},\\
&\sum_{(i,j)\in H_n} d_{r}(i,j)\,y_{ij}=2
&&\forall r\in[n]_0
\end{aligned}
\right\}.
\end{equation}

\section{Implementation and computational setup}

Although the no-three-in-line problem is classical, obtaining new exact results via a general-purpose CP-SAT solver is not a black-box exercise. The formulation given earlier is necessary but, in practice, far from sufficient: whether an instance is solved quickly or not at all can hinge on details of solver behaviour that are largely invisible at the level of the mathematical model, such as the interaction between propagation, cutting, branching, restarts, and the solver’s internal portfolio of search strategies. These mechanisms are not designed with this particular combinatorial geometry problem in mind, and their effective use therefore requires problem-specific experimentation and an understanding of what aspects of the instance structure the solver is able (or unable) to exploit.

For this reason we describe our implementation choices and computational protocol in some detail. Beyond enabling verification and reproducibility, these details are intended to be methodological: many combinatorial problems admit compact constraint formulations yet remain computationally intractable unless one knows how to present the structure to a modern solver and how to interpret its stochastic and heavy-tailed runtime behaviour. We expect that the modelling decisions, symmetry handling, and restart/parallelisation strategy discussed below will be relevant to researchers aiming to apply CP-SAT-type technology to other exact search problems in combinatorics, where the same “model correctness versus solver effectiveness” gap frequently determines what can be proved computationally.

\subsection{Hardware and software}

Most computations reported here were carried out on an AMD EPYC 9965 CPU (192 cores, 384 threads) with
2.2\,TB RAM. Unless stated otherwise, run times refer to this architecture and are measured as
wall-clock time to first solution when running $384$ independent solver instances in parallel.
All CP-SAT runs use a single search worker per instance (i.e.\ no internal parallelism within a CP-SAT run).
In cases when runs were terminated after the maximal job length of ten days, or when runs were distributed
over cores of several other HPC machines, a heuristic composite runtime equivalent was used.

\subsection{Constraint generation and deduplication}

Implementing the satisfiability model~\eqref{CPSAT} is straightforward, and a closely related mixed
integer programming formulation was previously solved with Gurobi \cite{eppstein2018}. The CP-SAT
implementation is similar: introduce binary variables $x_{ij}$ and add the linear constraints
$\sum_{(i,j)\in\ell\cap G_n}x_{ij}\le2$ for each $\ell\in\mathcal L^n_{\ge3}$. The resulting model has
$n^2$ binary variables and $|\mathcal L^n_{\ge3}|$ line constraints. Moreover
$|\mathcal L^n_{\ge3}|=\Theta(n^4)$; more precisely,
$|\mathcal L^n_{\ge3}|\sim\frac{5}{12\pi^2}n^4$ as $n\to\infty$ \cite{haukkanen2012}. For comparison,
$\binom{n^2}{2}$ is a crude upper bound on the number of distinct lines determined by unordered pairs
of grid points, so $|\mathcal L^n_{\ge3}|/\binom{n^2}{2}\to5/(6\pi^2)\approx0.084$.

In the implementation, each line constraint is represented by its finite intersection set
$\ell\cap G_n$. Concretely, we build a dictionary whose keys are canonical encodings of distinct
intersection sets $\ell\cap G_n$, and whose values store the corresponding lists of lattice points.
The resulting collection of constraints is then passed to CP-SAT as linear inequalities. The
computational cost of generating these constraints and performing this deduplication is negligible
compared with the subsequent solver runtime for the instances considered here.

After rewriting constraints in terms of orbit variables $y_{ij}$ in the symmetry-reduced model
\eqref{CPSATred}, further accidental coincidences can occur: distinct representatives
$\ell\in\mathcal L^{n,\Gamma}_{\ge3}$ may induce the same incidence vector
$(c_{\ell}(i,j))_{(i,j)\in H_n}$ on $H_n$ and hence the same linear inequality. We therefore optionally
deduplicate again at this stage by keeping one representative per distinct incidence vector (CP-SAT
tolerates the redundancy either way). In addition, some line inequalities become tautological after
symmetry reduction (for example, if enforcing an empty anti-diagonal reduces $\ell\cap G_n$ to at
most two unfixed orbit sites), and these can be discarded outright since
$\sum_{(i,j)\in H_n} c_{\ell}(i,j)\,y_{ij}\le2$ then holds automatically for $y_{ij}\in\{0,1\}$.

\subsection{Model sizes}

\begin{table}[!ht]
\centering
\begin{tabular}{|r|rr|rr|}
\hline
 & \multicolumn{2}{c|}{Binary variables} & \multicolumn{2}{c|}{Constraints} \\
\hline
$n$ & no restriction & symmetry & no restriction & symmetry \\
\hline
2  & 4    & 1   & 2      & 2 \\
3  & 9    & 2   & 8      & 3 \\
4  & 16   & 4   & 14     & 6 \\
5  & 25   & 6   & 32     & 12 \\
$\vdots$ & $\vdots$ & $\vdots$ & $\vdots$ & $\vdots$ \\
56 & 3136 & 784 & 415230 & 103868 \\
57 & 3249 & 812 & 446296 & 118241 \\
58 & 3364 & 841 & 476358 & 119160 \\
59 & 3481 & 870 & 510756 & 135183 \\
60 & 3600 & 900 & 546354 & 136660 \\
\hline
\end{tabular}
\caption{Model sizes for the satisfiability formulation~\eqref{CPSAT} (``no restriction'') and its rotationally
symmetric reduction~\eqref{CPSATred} (``symmetry''). The table reports the number of Boolean decision variables and the
number of linear constraints in each model. Constraint counts include the $n$ row equalities enforcing two occupied sites
per row; horizontal line inequalities are not counted separately since they are subsumed by these equalities. In the
symmetry-reduced model we additionally discard line inequalities that become tautological after rewriting in orbit
variables (e.g.\ when fewer than three unfixed orbit sites remain on a line).}
\label{model}
\end{table}

Table~\ref{model} illustrates the substantial reduction in model size obtained by enforcing rotational symmetry.
The number of variables drops from $n^2$ to $|H_n|$, i.e.\ by a factor close to four (exactly $4$ for even $n$, and
$(n^2-1)/4$ for odd $n$). The number of constraints is reduced by roughly the same factor, reflecting both the smaller
variable set and the restriction to a representative family of line constraints (with further deduplication after
rewriting in orbit variables).

\subsection{Parallel protocol and termination}

Our parallel protocol is ``run-until-first-success''. Specifically, we launch $M=384$ independent CP-SAT runs, each with
a fresh random seed and a single internal search worker. Runs proceed independently until the first solver reports a
feasible solution; at that point the experiment terminates and all remaining solver instances are stopped. Reported wall
times therefore correspond to $T_M=\min\{T^{(1)},\dots,T^{(M)}\}$, measured from a common launch time to the first success.

\section{Solve times and scaling}

\subsection{Baseline timings}

\begin{figure}[!ht]
\centering
\includegraphics[width=0.9\linewidth]{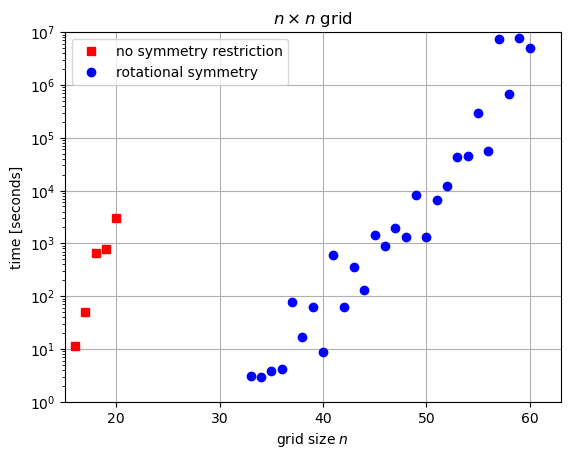}
\caption{Wall-clock time (log scale) to the first solution in a single experiment consisting of
$384$ independent CP-SAT runs executed in parallel on an AMD EPYC 9965 CPU. Red squares: direct
model~\eqref{CPSAT} (no symmetry restriction). Blue circles: symmetry-reduced
model~\eqref{CPSATred} enforcing rotational symmetry.} 
\label{times}
\end{figure}

As can be seen from Figure~\ref{times}, without symmetry restrictions the solver finds configurations
up to about $n=20$ within roughly one hour of wall-clock time in this experiment. Restricting the
search space to configurations with $90^\circ$ rotational symmetry (modified as described above for
odd $n$) reduces the model size substantially while still leaving solutions within the reduced model.
Together, these reductions make much larger instances tractable: as Figure~\ref{times} shows,
configurations for $n=50$ can be found within about one hour.

Since large runs are inherently stochastic, we now quantify the distribution of solve times for the
symmetry-reduced model~\eqref{CPSATred} and use it for a tentative extrapolation to larger $n$.

\subsection{Empirical solve-time distributions and parallelism}

Let $T$ denote the wall-clock time to the first solution in a single CP-SAT run (with a fresh random
seed), and write $F(t)=\mathbb P(T\le t)$ for its cumulative distribution function and
$S(t)=1-F(t)$ for the survival function. When running $M$ independent solver instances in parallel,
the time to the first solution is
\[
T_M:=\min\{T^{(1)},\dots,T^{(M)}\},
\]
and therefore
\begin{equation}
\label{FM}
S_M(t)=\mathbb P(T_M>t)=S(t)^M,
\qquad\text{equivalently}\qquad
F_M(t)=1-(1-F(t))^M.
\end{equation}
Thus, the small-$t$ behaviour of $F(t)$ controls the practically relevant distribution $F_M(t)$.

\paragraph{Data collection protocol.}
For $n=33,\dots,44$ we generated empirical solve-time distributions by first running a brief pilot to
identify a cutoff time $\tau$ such that $F_{384}(\tau)$ was roughly $0.99$. We then ran a larger batch
of independent single-run experiments with this single cutoff. Over the time windows shown in
Figures~\ref{fig:N34} and \ref{fig:N44}, censoring effects are negligible on the scales relevant for
$F_{384}(t)$, and we compute $F(t)$ directly from the observed completion times. (If heavy censoring
were present---for example, if a substantial fraction of runs timed out before completion---then a
Kaplan--Meier estimator would be the appropriate replacement; we did not need this here.)

\begin{figure}[!ht]
\centering
\includegraphics[width=0.9\linewidth]{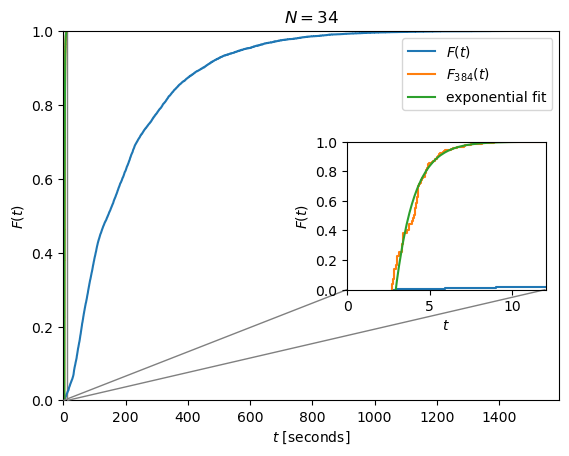}
\caption{Empirical cumulative distribution function $F(t)$ of the time to success for a single CP-SAT
run (blue), obtained from $10{,}436$ independent runs of~\eqref{CPSATred} at $n=34$.
Also shown is the derived CDF $F_{384}(t)$ for the first success among $M=384$ parallel runs (orange),
computed via~\eqref{FM}, and a shifted-exponential fit~\eqref{expmodel} to $F_{384}(t)$ near the
origin (green). The inset magnifies the small-time region relevant for $F_{384}$.}
\label{fig:N34}
\end{figure}

Figure~\ref{fig:N34} illustrates this transformation for $n=34$. The blue curve is the empirical
single-run CDF $F(t)$, while the orange curve is the derived $F_{384}(t)$ computed from~\eqref{FM}.
The latter concentrates heavily at small times: a small success probability per run is amplified into
a large success probability for the parallel experiment.

\subsection{A shifted-exponential model}

Motivated by the near-linearity of $-\log(1-F_{384}(t))$ over the early-time regime, we model
$T_M$ by a shifted exponential distribution,
\begin{equation}
\label{expmodel}
F^{\exp}_M(t)=
\begin{cases}
0, & t\le t_0,\\[2mm]
1-\exp\!\left(-\dfrac{M(t-t_0)}{t_1}\right), & t>t_0,
\end{cases}
\end{equation}
where $t_0$ captures an effective deterministic overhead (model construction, presolve, initial
propagation, etc.) and $t_1$ is a characteristic ``delay'' scale. If the underlying single-run time
$T$ itself were shifted exponential with the same $t_0$ and $t_1$, then~\eqref{expmodel} would follow
exactly from~\eqref{FM}; here we use~\eqref{expmodel} primarily as an accurate description of the
empirical $F_M(t)$ for small~$t$.

For $T_M$ distributed as~\eqref{expmodel}, the mean and quantiles are explicit. In particular,
\begin{equation}
\label{meanquant}
\mathbb E[T_M]=t_0+\frac{t_1}{M},
\qquad
t_p:=F_M^{-1}(p)=t_0+\frac{t_1}{M}\log\!\left(\frac{1}{1-p}\right).
\end{equation}

\begin{figure}[!ht]
\centering
\includegraphics[width=0.9\linewidth]{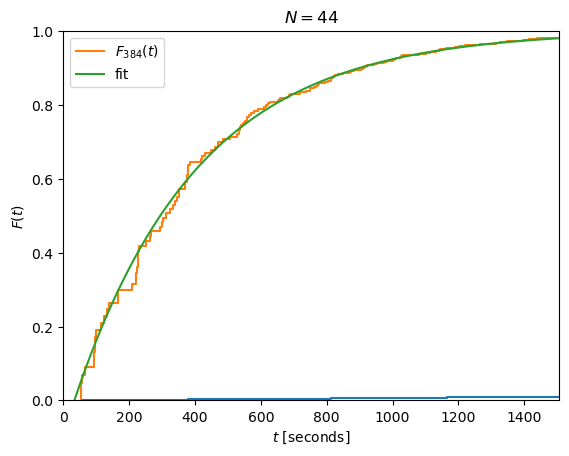}
\caption{For $n=44$: derived CDF $F_{384}(t)$ for the first success among $384$ parallel runs (orange)
together with a shifted-exponential fit~\eqref{expmodel} (green). On this time window the empirical
single-run CDF $F(t)$ is extremely small and is therefore not visually prominent.}
\label{fig:N44}
\end{figure}

\subsection{Parameter growth and extrapolation}

Table~\ref{table2} reports the fitted parameters $(t_0,t_1)$ for even $n$ from $34$ to $44$, together
with the implied mean time to first solution for $M=384$ parallel runs, $\mathbb E[T_{384}]$ from
\eqref{meanquant}. While the fitted threshold $t_0$ grows moderately, the delay scale $t_1$ grows
rapidly; for $n=44$ it already exceeds $40$ hours, even though $\mathbb E[T_{384}]$ remains on the
order of minutes.

\begin{table}[!ht]
\centering
\begin{tabular}{r|r|r|r}
$n$ & $t_0$ [s] & $t_1$ [s] & $t_0+t_1/384$ [s]\\
\hline
34 &  2.92 &    550.61 &   4.36\\
36 &  3.70 &   1721.67 &   8.18\\
38 &  6.77 &   6029.78 &  22.47\\
40 & 10.96 &  12016.51 &  42.25\\
42 & 29.79 &  24653.03 &  93.99\\
44 & 33.24 & 144613.52 & 409.84\\
\end{tabular}
\caption{Fitted threshold time $t_0$ and delay time scale $t_1$ for even $n=34,36,\dots,44$, using the
shifted-exponential model~\eqref{expmodel}. The final column is the implied mean time to first
solution when running $M=384$ solvers in parallel, cf.~\eqref{meanquant}.}
\label{table2}
\end{table}

For practical scheduling, quantiles are more relevant than the mean. Using~\eqref{meanquant}, the
median and $98\%$ quantile of $T_M$ are
\[
t_{0.5}=t_0+\frac{t_1}{M}\log 2,
\qquad
t_{0.98}=t_0+\frac{t_1}{M}\log 50.
\]
Figure~\ref{even} plots, on a semi-logarithmic scale, (i) observed single-run solve times for the
instances considered, and (ii) the estimated $50\%$ and $98\%$ completion times for $M=384$ parallel
runs. Least-squares fits of $\log t$ against $n$ over the measured range suggest an approximately
exponential growth of these quantiles with $n$. Extrapolating these fits gives a rough indication of
which sizes may be reachable within a fixed wall-clock budget (here indicated by a ten-day line).
We emphasize that this extrapolation is heuristic: modest curvature in the semi-log plot would
translate into large changes at the extrapolated sizes.

\begin{figure}[!ht]
\centering
\includegraphics[width=0.9\linewidth]{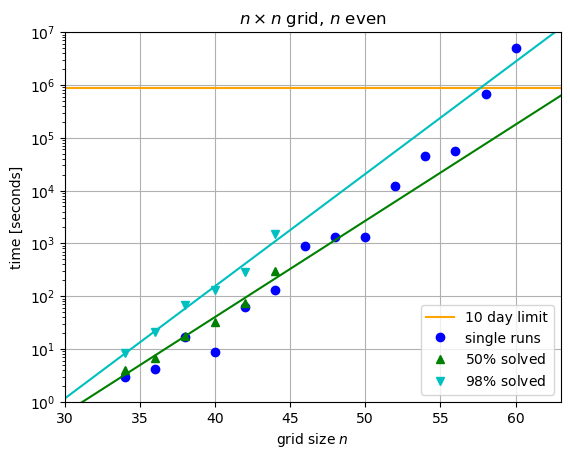}
\caption{Even sizes: semi-log plot of solve-time statistics versus grid size $n$. Blue circles:
observed wall-clock times to first solution in single runs. Green triangles and cyan inverted
triangles: estimated $50\%$ and $98\%$ completion times for $M=384$ parallel runs, derived from the
shifted-exponential fit~\eqref{expmodel} via~\eqref{meanquant}. Solid lines: least-squares fits of
$\log t$ versus $n$. The horizontal line indicates a ten-day wall-clock budget. Reported times over 10 days are aggregated from multiple job runs.}
\label{even}
\end{figure}

Even and odd $n$ behave differently under our enforced symmetry (for odd $n$ only
$\Gamma_n=\langle\rho^2\rangle$ is imposed on diagonal variables), so we analyse odd sizes separately.
Figure~\ref{odd} presents the analogous semi-log plot for odd $n$ (with fits obtained over the
measured range). The extrapolations are consistent with our computational results: odd sizes up to
$n=59$ are found within the ten-day horizon under the same parallel regime.

\begin{figure}[!ht]
\centering
\includegraphics[width=0.9\linewidth]{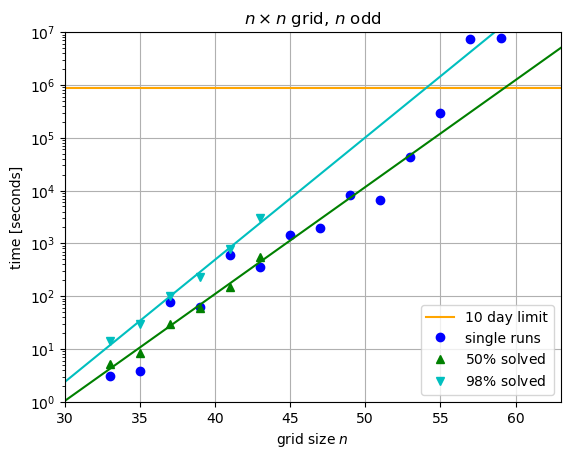}
\caption{Odd sizes: solve-time statistics versus grid size $n$, shown as in
Figure~\ref{even}. Because the enforced symmetry is weaker for odd $n$, the corresponding
solve-time growth differs, motivating separate fits.}
\label{odd}
\end{figure}

\subsection{Attempts at further speedups}

Given the effectiveness of CP-SAT on this problem, we explored several refinements. CP-SAT exposes a
large number of parameters, but in our experiments parameter changes did not produce improvements
beyond statistical variation. In line with standard practice, we also ran portfolios of parameter
choices (a ``hedging'' strategy) for large $n$, again without a consistent speedup.

We additionally tested: (i) mild guidance via objective functions or branching priorities derived
from empirical density profiles; (ii) for even $n$, constraints fixing the number of occupied sites
on the diagonals; (iii) ``anchoring'' selected sites; and (iv) warm starts based on partial
configurations. None of these improved performance reliably, and some were detrimental. In
particular, warm-starting an instance at size $n+2$ by embedding a solution at size $n$ often caused
the search to stall.

Overall, the best-performing strategy was to allow the solver to operate with minimal external
interference. While CP-SAT is partly a black box, its strength is widely attributed to the dynamic
generation of cutting planes and learned clauses during search. External restrictions can change the
search trajectory and may inhibit this mechanism, offering a plausible explanation for the negative
results of several ``helper'' constraints in our setting.

\section{Conclusion and outlook}

We have shown, using a symmetry-reduced CP-SAT formulation, that configurations with $2n$ points and
no three collinear exist on the $n\times n$ grid for all $n\le 60$. Combined with earlier results for
$n\le46$ and $n\in\{48,50,52\}$, this raises the smallest $n$ for which it is currently unknown
whether $D(n)=2n$ from $47$ to $61$.

From a computational perspective, enforcing rotational symmetry reduces the satisfiability model by
roughly a factor of four in both variables and constraints and makes sizes near $n=60$ tractable on
modern many-core hardware. Empirically, for parallel ``run-until-first-success'' experiments, the
early-time behaviour of the solve-time distribution is well described by a shifted-exponential model,
allowing quantiles relevant for scheduling to be estimated and (heuristically) extrapolated. The
resulting extrapolations are suggestive but should be interpreted cautiously: mild curvature in the
semi-log scaling plots would substantially change predictions at larger sizes.

The remaining open problem is to determine whether $D(61)=122$ (and beyond). We attempted $n=61$ and $n=62$ 
with the current method; however, no solution was found within $10^7$ seconds, so we are 
leaving this as an open case.

On the algorithmic side, an interesting direction is to identify additional structure (beyond rotational symmetry) that
reduces the search space without eliminating solutions, or to design hybrid approaches that exploit
learned constraints while preserving the solver's ability to generate effective clauses during
search.

\section*{Data and software availability}

The configurations listed in Table~\ref{table1} are representatives of rotational orbits and can be
expanded to full configurations as described in Section~3.

A reference implementation of the CP-SAT model described in this manuscript was generated from the
mathematical specification and implementation details in the text using an LLM-based coding assistant
(ChatGPT). The generated code was then run and validated against a verification program and against
the reported configurations and timings. This code and the configuration data from Table \ref{table1} 
are available at 
\begin{itemize}
    \item \url{https://github.com/ThomasPrellberg/no-three-in-line---CP-SAT}
\end{itemize} 
to facilitate reproducibility. Independent verification of each listed configuration takes seconds and does not rely on CP-SAT.

\section*{Declaration of competing interest}
The author declares no competing interests.

\section*{Acknowledgements}
This research utilized Queen Mary's Apocrita HPC facility, supported by QMUL Research-IT,
\url{http://doi.org/10.5281/zenodo.438045}.

\bibliographystyle{elsarticle-num}

\section*{\sloppy Declaration of generative AI and AI-assisted technologies in the manuscript preparation process}
During the preparation of this work the author used OpenAI ChatGPT (ChatGPT Plus) in order to generate Python code implementing (\ref{CPSAT}) and (\ref{CPSATred}), and to assist with editorial checks and content organization. After using this tool/service, the author reviewed and edited the content as needed and takes full responsibility for the content of the published article.

\end{document}